\tikzstyle{vertex}=[circle, draw, inner sep=2pt, minimum size=6pt]
\newcommand{\vertex}{\node[vertex]}
\newtheorem{theorem}{Theorem}
\newtheorem{proposition}[theorem]{Proposition}
\newtheorem{corollary}[theorem]{Corollary}
\numberwithin{claim}{theorem}
\newcommand{\B}{\mathcal{B}}
 \newcommand{\ovd}[1]{\overrightarrow{#1}}
 \newcommand{\omitir}[1]{}
\newcommandx{\mm}[2][1=]{\todo[size=\tiny,backgroundcolor=yellow!20,linecolor=red,#1]{#1 \\ \hfill --- Martín M}}
\begin{document}

\title{Lines in quasi-metric spaces with four points\footnote{
Supported by ANID Basal program FB21005 and PAPIIT-UNAM grant IN108121.}}
\author{Gabriela Araujo-Pardo$^1$, Martín Matamala$^2$ and Jos\'e Zamora$^3$\\
\\
  ($1$) Instituto de Matemáticas, Universidad Nacional Aut\'onoma de M\'exico, M\'exico\\
  ($2$) DIM-CMM, CNRS-IRL 2807, Universidad de Chile, Chile\\
  ($3$) Depto. de Matemáticas, Universidad Andres Bello, Chile.}

\maketitle

\begin{abstract} A set of $n$ non-collinear points in the Euclidean plane defines at least $n$ different lines. Chen and Chvátal in 2008 conjectured that the same results is true in metric spaces for an adequate definition of line.
More recently, this conjecture was studied in the context of quasi-metric spaces.

In this work we prove that there is a quasi-metric space on four points $a$, $b$, $c$ and $d$ whose betweenness is $\B=\{(c,a,b),(a,b,c),(d,b,a),(b,a,d)\}$. Then, this space has only three lines none of which has four points. Moreover, we show that the betweenness of any quasi-metric space on four points with this property is isomorphic to $\B$. Since $\B$ is not metric, we get that Chen and Chvátal's conjecture is valid for any metric space on four points.
\end{abstract}

\section{Introduction}

The Chen and Chvátal's conjecture, introduced in 2008 (see \cite{CC}), affirms that every finite metric space $M$ with $n\geq 2$ points has the so called \emph{de Bruijn and Erdös} (DBE) property: 
\begin{equation}
M \text{ has a \emph{line} containing all the points or at least $n$ different \emph{lines}} \label{dbe}\tag{DBE} 
\end{equation}

This conjecture generalizes to metric spaces a well-known result known as de Bruijn and Erdös's Theorem (see \cite{chvatal_2018} for a more detailed account), proved by Erdös in \cite{erdos1943} and generalized by de Bruijn and Erdös in \cite{dbe}, saying that every set of $n\geq 2$ points in the Euclidean plane determines at least $n$ distinct lines unless all the points lay in the same line.

Lines are defined in metric spaces as a natural generalization of its definition in the plane. If $d$ is the metric of a metric space, then given two distinct points $x$ and $y$, the \emph{segment} defined by $x$ and $y$, denoted by $[xy]$, is the set defined by 
$$[xy]=\{z\mid d(x,y)=d(x,z)+d(z,y)\}.$$
Similarly, the \emph{line} defined by $x$ and $y$, denoted by $\ovd{xy}$, is the set defined by 
$$\ovd{xy}=\{z\mid x\in [zy] \lor z\in [xy]\lor y\in [xz]\}.$$

The set of all lines in a metric space $M=(V,d)$ is denoted by $\mathcal{L}(M)$. That is, 
$$\mathcal{L}(M)=\{\ovd{xy}\mid x,y\in V\}.$$

In \cite{AM20}, the DBE property was study in the context of \emph{quasi-metric spaces}, first studied by Wilson in \cite{Wilson1931}, and defined as a pair $(V,d)$ where $d$ is a quasi-metric on $V$, that is, it satisfies $\forall x,y\in V, d(x,y)=0 \iff x=y$,
and $\forall x,y,z\in V, d(x,y)\leq d(x,z)+d(z,y)$. 

As far as we known, the DBE property is valid for metric spaces with distances in $\{0,1,2\}$ \cite{ChCh11, Chvatal2}, with bounded diameter and large enough number of points \cite{metricSpace} and for metric spaces defined  
by chordal graphs \cite{BBCCCCFZ}, distance-hereditary graphs \cite{AK}, graphs such that every induced subgraph is either a chordal graph, has a cut-vertex or a non-trivial module \cite{AMRZ},  
bisplit graphs \cite{BKR}, $(q,q-4)$-graphs \cite{SS} and graphs without induced house or hole \cite{ABMZ}. It is also valid for metric spaces defined by points in the plane with the Manhattan metric, when no two points lay in a horizontal or in a vertical line  \cite{KP2013}. Additionally, any quasi-metric space induced by tournaments or  by bipartite tournaments of diameter at most three \cite{AM20} has the DBE property. Stronger results were proved in \cite{MZ2020}.

In this work we study quasi-metric spaces on four points. It is worth to notice that the number of quasi-metric spaces on a given number of points is infinite since we do not have a restriction on the distances other than that given by the
triangle inequality. This is also true, even when we consider only non-isomorphic quasi-metric spaces.

However, in the study of lines, even some non-isomorphic quasi-metric spaces could be considered as the same. In fact, it is clear that by knowing the segments defined by pairs of distinct points we also know all the lines defined by them. This fact is captured by the \emph{betweenness relation} first studied by Menger \cite{menger1928} in the context of metric spaces. The betweenness of a quasi-metric space $Q$, denoted by $\B(Q)$, is the set of all triples $(x,y,z)$ with $x,y,z$ three distinct points in $Q$ such that $y\in [xz]$. In this work, to ease the presentation, we denote a triple $(x,y,z)$ by the word $xyz$. Then, the betweenness of a quasi-metric space is given by:
$$\B(Q)=\{xyz\in V^3\mid d(x,z)=d(x,y)+d(y,z)\}.$$

When $Q$ has $n$ points we have that $|B(Q)|\leq n(n-1)(n-2)$, and thus the number of betweenness of quasi-metric spaces on $n$ points is at most $2^{n(n-1)(n-2)}$.  

The upper bound $n(n-1)(n-2)$ is too pessimistic since when a triple $xyz$ belongs to $\B(Q)$ we know that the triples $yxz$ and $xzy$ do not belong to $\B(Q)$. Moreover, two quasi-metric spaces $Q=(V,d)$ and $Q'=(V',d')$ whose betweenness relations are different could still be considered as equal when their betweenness relations are (\emph{point-wise}) isomorphic. That is, when  
there is a bijection $f$ between $V$ and $V'$ such that 
$$\forall x,y,z\in V, xyz\in \B(Q) \iff f(x)f(y)f(z)\in \B(Q').
$$

When $n=3$, we can show that there are only five quasi-metric spaces whose betweenness are not isomorphic.
The betweenness of these spaces on the set $\{a,b,c,d\}$ are:
$$\emptyset, \{abc\}, \{abc,cba\}, \{abc,bca\} \text{ and }\{abc,bca,cab\}.$$

To each of them we can associate a quasi-metric space defined by a directed graph (see  Figure \ref{f:3digraphs}). 

If $\B$ is a betweenness of a metric space, then  $xyz\in \B$ implies that $zyx\in \B$. Hence, only the empty set and $\{abc,cba\}$ are betweenness of metric spaces on three points. 

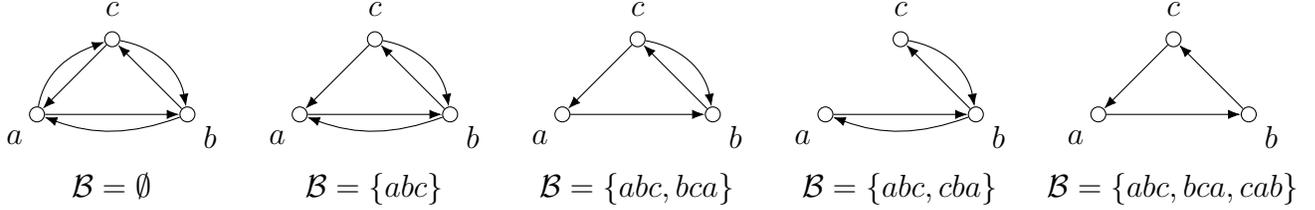
\begin{figure}
\centering
\begin{tabular}{ccccc}
\begin{tikzpicture}[scale=1]
\vertex[circle,  minimum size=4pt](1) at  (-1,0) {};
\vertex[circle,  minimum size=4pt](2) at  (1,0) {};
\vertex[circle,  minimum size=4pt](3) at  (0,1) {};
\node (4) at  (-1.3,-0.3) {$a$};
\node (5) at  (1.3,-0.3) {$b$};
\node (6) at  (0,1.4) {$c$};

\draw[-{Latex}] (1) --
(2);
\draw[-{Latex}] (2) to[out=-160,in=-20] (1);

\draw[-{Latex}] (2) -- (3);
\draw[-{Latex}] (3) to[out=-10,in=100] 
(2);

\draw[-{Latex}] (3) -- 
(1);
\draw[-{Latex}] (1) to[out=80,in=-160] (3);
\end{tikzpicture}
&
\begin{tikzpicture}
\vertex[circle,  minimum size=4pt](1) at  (-1,0) {};
\vertex[circle,  minimum size=4pt](2) at  (1,0) {};
\vertex[circle,  minimum size=4pt](3) at  (0,1) {};
\node (4) at  (-1.3,-0.3) {$a$};
\node (5) at  (1.3,-0.3) {$b$};
\node (6) at  (0,1.4) {$c$};

\draw[-{Latex}] (1) --
(2);
\draw[-{Latex}] (2) to[out=-160,in=-20] (1);
\draw[-{Latex}] (2) -- (3);
\draw[-{Latex}] (3) to[out=-10,in=100] 
(2);

\draw[-{Latex}] (3) -- 
(1);
\end{tikzpicture}
&
\begin{tikzpicture}
\vertex[circle,  minimum size=4pt](1) at  (-1,0) {};
\vertex[circle,  minimum size=4pt](2) at  (1,0) {};
\vertex[circle,  minimum size=4pt](3) at  (0,1) {};
\node (4) at  (-1.3,-0.3) {$a$};
\node (5) at  (1.3,-0.3) {$b$};
\node (6) at  (0,1.4) {$c$};

\draw[-{Latex}] (1) --
(2);

\draw[-{Latex}] (2) -- (3);
\draw[-{Latex}] (3) to[out=-10,in=100] 
(2);
\draw[-{Latex}] (3) -- 
(1);
\end{tikzpicture}
&
\begin{tikzpicture}
\vertex[circle,  minimum size=4pt](1) at  (-1,0) {};
\vertex[circle,  minimum size=4pt](2) at  (1,0) {};
\vertex[circle,  minimum size=4pt](3) at  (0,1) {};
\node (4) at  (-1.3,-0.3) {$a$};
\node (5) at  (1.3,-0.3) {$b$};
\node (6) at  (0,1.4) {$c$};

\draw[-{Latex}] (1) --
(2);
\draw[-{Latex}] (2) to[out=-160,in=-20] (1);

\draw[-{Latex}] (2) -- (3);
\draw[-{Latex}] (3) to[out=-10,in=100] 
(2);

\end{tikzpicture}
& 
\begin{tikzpicture}
\vertex[circle,  minimum size=4pt](1) at  (-1,0) {};
\vertex[circle,  minimum size=4pt](2) at  (1,0) {};
\vertex[circle,  minimum size=4pt](3) at  (0,1) {};
\node (4) at  (-1.3,-0.3) {$a$};
\node (5) at  (1.3,-0.3) {$b$};
\node (6) at  (0,1.4) {$c$};

\draw[-{Latex}] (1) --
(2);

\draw[-{Latex}] (2) -- (3);
(2);

\draw[-{Latex}] (3) -- 
(1);
\end{tikzpicture}
\\
$\B=\emptyset$ & $\B=\{abc\}$ & $\B=\{abc,bca\}$& $\B=\{abc,cba\}$ & $\B=\{abc,bca,cab\}$
\end{tabular}
 \caption{Directed graphs with three vertices defining quasi-metric spaces on three points with non-isomorphic betweenness.}\label{f:3digraphs}
\end{figure}

The lines of these quasi-metric spaces are the following.
\begin{center} 
 \begin{tabular}{|c|c|c|c|c|c|c|c|}
\hline
 $\B$ & $\ovd{ab}$ & $\ovd{ba}$ & $\ovd{ac}$ & $\ovd{ca}$ & $\ovd{bc}$ & $\ovd{cb}$ & $|{\cal L}(Q)|$ \\ \hline
  $\emptyset$ & $\{a,b\}$ & $\{a,b\}$ & $\{a,c\}$ & $\{a,c\}$ & $\{b,c\}$ & $\{b,c\}$ & 3\\ \hline
 $\{abc\}$ & $\{a,b,c\}$ & $\{a,b\}$ & $\{a,b,c\}$ & $\{a,c\}$ & $\{a,b,c\}$ & $\{b,c\}$ & 4\\ \hline
 $\{abc,bca\}$ & $\{a,b,c\}$ & $\{a,b,c\}$ & $\{a,b,c\}$ & $\{a,b,c\}$ & $\{a,b,c\}$ & $\{b,c\}$ & 2\\ \hline
 $\{abc,cba\}$ & $\{a,b,c\}$ & $\{a,b,c\}$ & $\{a,b,c\}$ & $\{a,b,c\}$ & $\{a,b,c\}$ & $\{a,b,c\}$ & 1\\ \hline
 $\{abc,bca,cab\}$ & $\{a,b,c\}$ & $\{a,b,c\}$ & $\{a,b,c\}$ & $\{a,b,c\}$ & $\{a,b,c\}$ & $\{a,b,c\}$ & 1\\ \hline
 \end{tabular}
\end{center}
Hence, all have the DBE property. Moreover, each betweenness on three points can be defined by a quasi-metric space defined by a directed graph. 

We show below a quasi-metric space $Q(4)$, on four points, with distance in $\{0,1,2,3\}$, which does not have the DBE property, showing that the Chen and Chvátal conjecture does not extend to the class of  quasi-metric spaces with distance in $\{0,1,2,3\}$.

However, it could be a notable exception: our main result in this work is that any quasi-metric space on four points whose betweenness is not isomorphic to $\B(Q(4))$ has the DBE property. Moreover, we prove that $\B(Q(4))$ is neither isomorphic to the betweenness of a quasi-metric space with distances in $\{0,1,2\}$, nor to that of a space defined by a directed graph, nor to that of a metric space. As a consequence, any metric space on four points has the DBE property.

The quasi-metric of the space $Q(4)$ and its representation as a weighted directed graph is given in the following table. 

\begin{tabular}[b]{c c c}
\begin{minipage}[b]{6cm}
	\begin{center}
	\begin{tabular}{|c|c|c|c|c|}
	 \hline 
	$d(\cdot,\cdot)$ & $p$ & $s$ & $q$ & $r$ \\
	\hline 
	 $p$ & $0$ & $1$ & $1$ & $3$ \\
	\hline 
	 $s$ & $3$ & $0$ & $2$ & $3$ \\
	 \hline 
	 $q$ & $1$ & $2$ & $0$ & $2$ \\
	\hline
	 $r$ & $1$ & $1$ & $2$ & $0$\\
	 \hline 
	\end{tabular}
	\end{center}
	$Q(4)$: quasi-metric space without Property (DBE).
\end{minipage}
	& \ \ \ & \begin{minipage}[b]{7cm} 
	\begin{center}
\begin{tikzpicture}[scale=0.8]

	\vertex[circle,  minimum size=8pt](1) at  (0,2) {};
	\vertex[circle,  minimum size=8pt](2) at  (-1.5,0) {};
	\vertex[circle,  minimum size=8pt](3) at  (1.5,0) {};
	\vertex[circle,  minimum size=8pt](4) at  (0,4) {};
	\node (5) at  (-0.6,4) {$p$};
	\node () at  (2.0,0) {$s$};
	\node () at  (-2.0,0) {$r$};
	\node () at  (-0.0,1.5) {$q$};
	\node () at  (0.2,2.8) {$1$};
	\node () at  (-1.6,2.4) {$3$};
	\node () at  (-1.1,2.4) {$1$};
    \node () at  (1.1,2.4) {$1$};
	\node () at  (1.6,2.4) {$3$};
	\node () at  (0.5,0.8) {$2$};
	\node () at  (-0.5,0.8) {$2$};
	\node () at  (0,0.3) {$3$};
	\node () at  (0,-0.6) {$1$};
	
	\draw[-latex] (3) to[out=80,in=-40] (4);
	\draw[-latex] (4) to[out=-60,in=100] (3);
	\draw[-latex] (4) to[out=-140,in=100] (2);
	\draw[-latex] (2) to[out=80,in=-120] (4);
	\draw[latex-latex] (1) to (4);
	
	\draw[-latex] (3) to[out=180,in=0] (2);
	\draw[-latex] (2) to[out=-20,in=-160] (3);

	\draw[latex-latex] (1) --(3);
	
	\draw[latex-latex] (1) -- (2);

	\end{tikzpicture}
	\end{center}
	Representation of $Q(4)$ as a weighted directed graph.
\end{minipage}
\end{tabular}

\begin{proposition}\label{p:qmnotdbe}
 The quasi-metric space $Q(4)$ has three lines none of which is universal. Moreover, its betweenness is neither isomorphic to that of a metric space, nor to those of a quasi-metric space with distances in $\{0,1,2,3\}$, nor to those of a quasi-metric space defined by a directed graph. 
\end{proposition}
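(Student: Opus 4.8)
The plan is to make $\B(Q(4))$ explicit, read off its lines, and then refute each of the three kinds of realization by extracting forced distances from the four betweenness triples and exhibiting a contradiction.

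First I would compute, directly from the distance table, that
$$\B(Q(4))=\{pqr,\,sqp,\,qps,\,rpq\},$$
by testing each of the twelve ordered pairs $(x,z)$ against its two candidate intermediates. Knowing $\B(Q(4))$, the lines follow mechanically: for an ordered pair $(x,y)$ the line $\ovd{xy}$ consists of $x$, $y$, and every $z$ with $zxy$, $xzy$ or $xyz$ in $\B(Q(4))$. Carrying this out for all twelve pairs yields exactly three distinct sets, $\{p,q,r\}$, $\{p,q,s\}$ and $\{r,s\}$; each has at most three points, so none is universal (and in particular $Q(4)$ has three lines and no universal line, hence fails (DBE)).

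For the metric case I would invoke the reversal symmetry of metric betweenness: since $d(x,z)=d(z,x)$, every metric space satisfies $xyz\in\B \iff zyx\in\B$, and closure under reversal is preserved by any betweenness isomorphism. Since $pqr\in\B(Q(4))$ but $rqp\notin\B(Q(4))$, the relation is not closed under reversal and therefore cannot be isomorphic to the betweenness of a metric space.

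The substantive part, and the main obstacle, is the two remaining cases; after relabelling it is enough to show that no space of the relevant type has betweenness exactly $\B(Q(4))$. For distances in $\{0,1,2\}$ the key remark is that $xyz$ is a betweenness triple if and only if $d(x,y)=d(y,z)=1$ and $d(x,z)=2$, because distinct points are at distance at least $1$ while $d(x,z)\le 2$. Reading the four triples this way pins down ten of the twelve directed distances and leaves only $d(r,s)$ and $d(s,r)$ free, so I would split on $d(r,s)\in\{1,2\}$: if $d(r,s)=2$ then $d(r,p)=d(p,s)=1$ forces $rps\in\B$, while if $d(r,s)=1$ then $d(q,r)=d(r,s)=1$ together with $d(q,s)=2$ forces $qrs\in\B$; as neither $rps$ nor $qrs$ lies in $\B(Q(4))$, both are impossible. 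For the digraph case I would use that in a strongly connected unweighted digraph the first vertex on a shortest path from $x$ to $z$ witnesses a betweenness triple whenever $d(x,z)\ge 2$, so a pair with nothing strictly between it must be adjacent, i.e.\ at distance $1$. Applying this to the eight ordered pairs $(x,z)$ for which no $y$ satisfies $xyz\in\B(Q(4))$ forces those distances to $1$, and the four triples then force the remaining distances to $2$; hence any such digraph has all distances in $\{0,1,2\}$, reducing this case to the previous one (concretely, $d(q,r)=d(r,s)=1$ and $d(q,s)=2$ again produce the forbidden triple $qrs$). The point demanding care throughout is checking that the distances forced by the four triples are mutually consistent and that the case split on $d(r,s)$ is genuinely exhaustive.
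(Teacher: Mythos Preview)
Your argument is correct and largely mirrors the paper's. The computation of $\B(Q(4))$, the enumeration of the three lines, and the reversal-symmetry argument for the metric case are identical. For the $\{0,1,2\}$ case (the stated $\{0,1,2,3\}$ is a typo in the proposition, since $Q(4)$ itself takes values in $\{0,1,2,3\}$; the paper's proof treats $\{0,1,2\}$, as you did), you case-split on $d(r,s)$, whereas the paper uses the non-triple $rps\notin\B$ directly to force $d(r,s)<d(r,p)+d(p,s)=2$ and then derives the same forbidden triple $qrs$; the content is the same.

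The one place your route genuinely differs is the digraph case. The paper observes that a strongly connected digraph on four vertices has diameter at most $3$: diameter $\le 2$ reduces immediately to the $\{0,1,2\}$ case, while diameter exactly $3$ yields a shortest path visiting all four vertices and hence a universal line $\ovd{xz}$, which is already excluded. Your argument instead uses the ``first vertex on a shortest path'' observation to show that every ordered pair with no intermediate in $\B(Q(4))$ must be at distance $1$, and the four triples then force the remaining four distances to equal $2$, so all distances land in $\{0,1,2\}$ directly. Both reductions are valid; the paper's is a touch shorter, while yours is more self-contained in that it does not need the separate universal-line observation for the diameter-$3$ subcase.
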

\begin{proof}
One can check that the betweenness of $Q(4)$ is given by:
$$\B(Q(4))=\{pqr,rpq,sqp,qps\}.$$

Then, the lines of $Q(4)$ are given by: 
$$\ovd{pq}=\ovd{pr}=\ovd{qr} = \ovd{rp}=\ovd{rq}= \{ p,q,r\},$$ 
$$\ovd{qp}=\ovd{sq}=\ovd{sp}=\ovd{qs}=\ovd{ps} = \{p,q,s\}$$ and 
$$\ovd{rs}=\ovd{sr}=\{r,s\}$$
proving the first statement.

Since $pqr\in \B(Q(4))$ and $rqp\notin \B(Q(4))$, the betweenness $\B(Q(4))$ is not isomorphic to one of a metric space. 

If $\B(Q(4))$ is isomorphic to the betweenness of a  quasi-metric space $Q=(\{a,b,c,d\},\rho)$, with $\rho:\{a,b,c,d\}^2\to \{0,1,2\}$, then without loss of generality we can assume that this isomorphism is given by $f(a)=p$, $f(b)=q$, $f(c)=r$ and $f(d)=s$ and then 
$\B(P)=\{abc,cab,bad,dba\}$. Hence, 
$$\rho(a,b)=\rho(b,a)=\rho(b,c)=\rho(c,a)=\rho(d,b)=\rho(a,d)=1 $$
 and 
 $$\rho(a,c)=\rho(c,b)=\rho(d,a)=\rho(b,d)=2.$$
Since $cad\notin \B(P)$ we have that $\rho(c,d)<\rho(c,a)+\rho(a,d)=2$ and then $\rho(c,d)=1$. This implies the contradiction $bcd\in \B(P)$. Therefore, $\B(Q(4))$ is not isomorphic to the betweenness of a quasi-metric space with distances in $\{0,1,2\}$.

From this we deduce that $\B(Q(4))$ is not isomorphic to the betweenness of a quasi-metric space defined by a directed graph. Indeed, the diameter of a directed graph with four vertices is at most three. When it is less than three, the directed graph defines a quasi-metric space with distances in $\{0,1,2\}$ and we have shown that in this case its betweenness is not isomorphic to $\B(Q(4))$. When it is three, then it defines a quasi-metric space with a universal line and thus its betweenness can not be isomorphic to $\B(Q(4))$ either.
\end{proof}

We now prove our main result.

\begin{theorem}\label{p:unique}
 Let $P=(\{a,b,c,d\},\rho)$ a quasi-metric space without universal lines and with less than four lines, then $\B(P)$ is isomorphic to $\B(Q(4))$.
\end{theorem}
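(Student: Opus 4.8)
The plan is to translate lines into betweenness, argue by a covering count, and then pin the betweenness down with one short structural lemma. Writing the definition of $\ovd{xy}$ in terms of $\B(P)$, a point $z$ lies on $\ovd{xy}$ precisely when one of $zxy,xzy,xyz$ belongs to $\B(P)$; in particular $x,y\in\ovd{xy}$, so every line has exactly two or three points (two, since it contains its defining pair; at most three, since $P$ has no universal line). Each of the six pairs $\{x,y\}$ lies on $\ovd{xy}$, so the at most three lines of $P$ must cover all six pairs. As a three-point line covers three pairs, a two-point line covers one, and two distinct three-point lines share exactly one pair, a direct count rules out covering six pairs with one or two lines and with size profiles $(3,2,2)$ or $(2,2,2)$. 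Hence $P$ has exactly three lines, of profile $(3,3,3)$ or $(3,3,2)$; in the latter case the two triangles share an edge, which after renaming the points we call $\{p,q\}$, and the two-point line is forced to be the complementary pair $\{r,s\}$.

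The engine is the following splitting observation, used with the elementary incompatibility recalled in the introduction ($xyz\in\B$ forbids $yxz$ and $xzy$). If an edge $\{x,y\}$ lies on a single line $L$, then $\ovd{xy}=\ovd{yx}=L$, and writing out membership of the third point of $L$ in each yields two disjunctions of betweenness triples that must hold. If instead $\{x,y\}$ is the edge shared by two three-point lines $L$ and $L'$, then no third point can lie on both $\ovd{xy}$ and $\ovd{yx}$: otherwise both would equal $L$, the remaining point would be excluded from each, and the triple spanning $L'$ would carry no betweenness at all, so $L'$ could not be a genuine three-point line. Thus on a shared edge the two directed lines split, one being $L$ and the other $L'$, and this split excludes a large block of orderings on each of the two triples through that edge.

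For profile $(3,3,3)$ the three lines share a common point $p$. Feeding the single-line disjunctions coming from the edges not through $p$, together with the splitting exclusions from two edges through $p$, into the triple $\{p,q,r\}$ collapses its admissible orderings so far that the required disjunctions can be satisfied only by a pair of orderings forbidden by the elementary incompatibility, or cannot be satisfied at all; either alternative is a contradiction, so $(3,3,3)$ does not occur. For profile $(3,3,2)$, the two-point line gives $\ovd{rs}=\ovd{sr}=\{r,s\}$, which excludes every ordering on $\{p,r,s\}$ and on $\{q,r,s\}$, so all betweenness lives on $\{p,q,r\}$ and $\{p,q,s\}$. Splitting the shared edge $\{p,q\}$, and using the isomorphism swapping $p$ and $q$, we may assume $\ovd{pq}=\{p,q,r\}$ and $\ovd{qp}=\{p,q,s\}$. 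Combining the resulting exclusions, the disjunctions from the edges incident to $r$ (resp. $s$), and the elementary incompatibility forces $\B(P)$ restricted to $\{p,q,r\}$ to be $\{pqr,rpq\}$ and restricted to $\{p,q,s\}$ to be $\{sqp,qps\}$; their union is exactly $\B(Q(4))$.

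The main obstacle is not any single computation but the bookkeeping that eliminates the \emph{wrong} three-point betweennesses. On one triple, the single-line disjunctions together with the incompatibility are satisfied not only by the target pair but also by the symmetric (equal-middle) pairs and by both cyclic triples; these spurious solutions are removed only once the no-universal-line hypothesis is injected through the splitting lemma, which is precisely what forces the two directed copies of the shared edge to land on different lines. Arranging the $(3,3,3)$ case so that the collapse produces a clean incompatibility, rather than an unstructured enumeration, is the step needing the most care, and the splitting lemma is the device that keeps both profiles under control.
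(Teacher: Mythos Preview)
Your argument is correct and genuinely different from the paper's. The paper starts from a single triple $abc\in\B$, forces $\ovd{ab}=\ovd{ac}=\ovd{bc}=\{a,b,c\}$, and then chases where $d$ can land: it first rules out $d\in\ovd{ca}$ by a direct contradiction (this is, in disguise, the elimination of your $(3,3,3)$ profile), and then splits on whether $b\in\ovd{ad}$ or $b\in\ovd{dc}$ to read off the betweenness. You instead first classify the line structure globally by a pair–covering count, reducing to the two profiles $(3,3,3)$ and $(3,3,2)$, and then drive both cases with a single reusable device, your splitting lemma on a shared edge. That lemma is the conceptual gain of your approach: it explains \emph{why} the two directed lines through the shared pair must land on different triangles, and it is exactly what the paper's ad hoc ``$d\notin\ovd{ca}$'' step is doing without saying so. The trade-off is that your write-up of the $(3,3,3)$ elimination is compressed to the point of being a sketch: one must still check the two orientations of the second split (one yields the incompatible pair $pqr,prq$, the other makes the disjunction for $p\in\ovd{qr}$ empty), and this deserves to be written out. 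With that expansion, your proof is complete and arguably cleaner than the paper's.
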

\begin{proof}
Let $\B=\B(P)$ be the betweenness of $P$.

We can assume that $\B \neq \emptyset$ as otherwise, for each pair of points $x,y$ of $P$ the line $\ovd{xy}$ contains only $x$ and $y$ and then $P$ would have six different lines. 

Without loss of generality we assume that $abc\in \B$. Then, $a\in \ovd{bc}$, $b\in \ovd{ac}$ and $c\in \ovd{ab}$. 
Since $P$ has four points and no universal line, we deduce that  
$$\ovd{ab}=\ovd{ac}=\ovd{bc}=\{a,b,c\}=:\ell_1 .$$ 

The point $d$ does not belong to any of these lines. 
In terms of the betweenness this fact can be expressed as follows.
\begin{eqnarray}
d \notin \ovd{ab} \iff \{dab,adb,abd\} \cap \B = \emptyset \label{eq:ab} \\
d \notin \ovd{bc} \iff \{dbc,bdc,bcd\} \cap \B = \emptyset \label{eq:bc} \\
d \notin \ovd{ac} \iff \{dac,adc,acd\} \cap \B = \emptyset. \label{eq:ac}
\end{eqnarray}
If the point $d$ appears in no triple of $\B$, then each line $\ovd{dx}$ contains only $d$ and $x$, for each $x\in\{a,b,c\}$, which would imply that $P$ has more than three lines. Thus, there is a triple in $\B$ containing the point $d$. From (\ref{eq:ab}), (\ref{eq:bc}) and (\ref{eq:ac}) the only options for this triple are those associated to $d\in \ovd{ca}\cup \ovd{cb}\cup \ovd{ba}$. 

We first prove that $d\notin \ovd{ca}$. For the sake of a contradiction, let us assume that $d\in \ovd{ca}$. Then, $\ovd{ca}\neq \ell_1$ and since $P$ has no universal line we get that $$\ell_2:=\ovd{ca}=\{a,c,d\}.$$ 

Moreover, $b\notin \ell_2=\ovd{ca}$ implies 
$\ovd{db}=\ovd{bd}\notin \{\ell_1,\ell_2\},$ since $d\notin \ell_1$. It also implies that $a\notin \ovd{cb}$, since $cab,cba$ are not in $\B$, and, as $abc\in \B$, we also know that $acb\notin \B$. 

From the fact that $a\notin \ovd{cb}$, we get that $\ovd{cb}\notin \{\ell_1, \ell_2\}$ and then,
$$\ell_3=\{b,c,d\}=\ovd{cb}=\ovd{bd}=\ovd{db}.$$
This implies that $c\in \ovd{bd}$ which in turns implies that $cbd\in \B$, since $d\notin \ovd{bc}$. 

As $a\notin \ell_3$ we get that $\ell_2=\ovd{ad}=\ovd{da}=\ovd{ca}$. This implies that $c\in \ovd{ad}$ which, as before, implies that $cad\in \B$, since $d\notin \ovd{ac}$. But, $cbd,cad\in \B$ implies that $\ovd{cd}=\{a,b,c,d\}$ which establishes the contradiction. 

We know that $d\notin \ovd{ac}$ and we have just proved that $d\notin \ovd{ca}$. Then, no triple in $\B$ contains $a$, $c$ and $d$. Thus, $c\notin \ovd{ad}\cup \ovd{da}$ and $a\notin \ovd{cd}\cup \ovd{dc}$ which implies that the three lines in $P$ are 
\begin{eqnarray*}
\ell_1& =& \{a,b,c\}=\ovd{ab}=\ovd{ac}=\ovd{bc}=\ovd{ca} \\
\ell_2 & = & \ovd{da}=\ovd{ad}, c\notin \ell_2 \\
\ell_3 & = & \ovd{dc}=\ovd{cd}, a\notin \ell_3
 \end{eqnarray*}
 
Our analysis now splits into two cases: $b\in \ell_2$ or $b\in \ell_3$. 

If $b\in \ell_2=\ovd{ad}$, then at least one of the triples $bad,abd$ or $adb$ belongs to $\B$. We know that $d\notin \ovd{ab}$ which implies that the triples $dab,adb,abd$ are not in $\B$. Thus, we get that $bad\in \B$. Similarly, as $\ell_2$ is also equal to $\ovd{da}$ we get that the triple $dba$ belongs to $\B$ and that the triples $bda$ and $dab$ does not belong to $\B$. Hence $bad$ and $dba$ are the only triples in $\{a,b,d\}^3$ which belong to $\B$. 
From the fact that the triple $bad$ belongs to $B$ we get that $\ovd{ba}=\ovd{bd}=\ovd{db}=\ell_2$.

Now, $b\in \ovd{ca}$ if and only if some of the triples $bca,cba$ or $cab$ belongs to $\B$. But, as $c\notin \ell_2=\ovd{ba}$ we get that the triple $cab$ is in $\B$ and that the triples $cba,bca$ are not in $\B$. Since we are assuming that $abc\in \B$ we also get that the triples $bac$ and $acb$ are not in $\B$. Whence, we get that $\B\cap \{a,b,c\}^3=\{abc,cab\}$. 

As above, from the fact that the triple $cab$ belongs to $\B$ we get that $a\in \ovd{cb}=\{a,b,c\}=\ell_1$. Then, $d\notin \ovd{cb}\cup \ovd{bc}$ which proves that $\B$ contains no triple containing $b,c$ and $d$. This shows that $\ell_3=\{c,d\}$ and that 
$$\B=\{abc,bad,cab,dba\}.$$
It is easy to see that the mapping associating
$p$ to $a$, $q$ to $b$, $r$ to $c$ and $s$ to $d$ shows that $\B(Q(4))$ and $\B$ are isomorphic.

A similar argument can be applied when $b\in \ell_3=\ovd{dc}$. In this situation we get that $\B\cap \{b,c,d\}^3=\{dcb,cbd\}$ which implies that $\ovd{cb}=\{b,c,d\}=\ell_3$. We also get that $\B\cap \{a,b,c\}^3=\{abc,bca\}$ which shows that $\ovd{ba}=\{a,b,c\}=\ell_1$ from which we conclude that $\B$ contains no triple containing $a$ and $d$. 
Therefore, $\B=\{abc,bca,cbd,dcb\}$ and the mapping associating $p$ to $b$, $q$ to $c$, $r$ to $a$ and $s$ to $d$ shows that $\B$ and $\B(Q(4))$ are indeed isomorphic.

\end{proof}

\begin{corollary} The DBE property is valid for every quasi-metric space on four points which is metric, has distances in $\{0,1,2\}$ or is defined by a directed graph.
\end{corollary}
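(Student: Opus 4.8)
The plan is to derive this corollary by contraposition, combining the two preceding results with essentially no new computation. First I would unwind the definition of the DBE property in the case $n=4$: a quasi-metric space on four points satisfies \eqref{dbe} if and only if it has a universal line (a line containing all four points) or at least four distinct lines. Negating this, a space on four points \emph{fails} the DBE property precisely when it has no universal line and strictly fewer than four lines. This is exactly the hypothesis of Theorem \ref{p:unique}.

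Next I would argue by contradiction. Suppose some quasi-metric space $P$ on four points lies in one of the three stated classes (it is metric, or it has distances in $\{0,1,2\}$, or it is defined by a directed graph) yet fails the DBE property. By the observation above, $P$ has no universal line and fewer than four lines, so Theorem \ref{p:unique} applies and yields that $\B(P)$ is isomorphic to $\B(Q(4))$.

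Finally I would invoke Proposition \ref{p:qmnotdbe}, which asserts that $\B(Q(4))$ is isomorphic neither to the betweenness of a metric space, nor to that of a quasi-metric space with distances in $\{0,1,2\}$, nor to that of a quasi-metric space defined by a directed graph. In each of the three cases this directly contradicts the isomorphism $\B(P)\cong\B(Q(4))$ obtained from the theorem. Hence no such $P$ can exist, and every space in these classes has the DBE property.

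The main (and essentially only) obstacle is the bookkeeping in the first step: one must verify that the negation of \eqref{dbe} for $n=4$ matches the hypothesis ``without universal lines and with less than four lines'' verbatim, so that Theorem \ref{p:unique} is immediately applicable. Once this identification is in place, the corollary follows at once from chaining Theorem \ref{p:unique} with Proposition \ref{p:qmnotdbe}; no further work with the underlying quasi-metric $\rho$ is required, since both the characterization of the exceptional betweenness and the exclusion of the three classes have already been carried out.
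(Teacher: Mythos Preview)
Your argument is correct and is exactly the approach the paper takes: the paper's own proof is the single line ``Direct from previous theorem and Proposition \ref{p:qmnotdbe}'', and your write-up simply spells out this implication via contraposition. No changes are needed.
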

\begin{proof}
 Direct from previous theorem and Proposition \ref{p:qmnotdbe}.
\end{proof}

\end{document}